\documentclass[12pt,a4paper]{amsart}
\usepackage{a4wide}
\usepackage{amsmath, amssymb, amsfonts,enumerate}
\usepackage[all]{xy}
\usepackage{amscd}
\usepackage{comment}
\usepackage{mathtools}
\usepackage{tikz} 
\usepackage{hyperref}
\newtheorem{theorem}{Theorem}[section]

\newtheorem{proposition}[theorem]{Proposition}

 \theoremstyle{definition}

 \newtheorem{example}[theorem]{Example}

\numberwithin{equation}{section}
\newcommand {\Z}{\mathbb{Z}} 

\newcommand{\OO}{\mathcal{O}}

\DeclareMathOperator{\diam}{diam}

\DeclareMathOperator{\htop}{h_{\scriptstyle  \text{\rm top}}}

\begin{document}
\title[A note on surjective endomorphisms]{A note on surjective endomorphisms of Smale spaces}
\author{Tullio Ceccherini-Silberstein}
\address{Dipartimento di Ingegneria, Universit\`a del Sannio, I-82100 Benevento, Italy}
\address{Istituto Nazionale di Alta Matematica ``Francesco Severi'', I-00185 Rome, Italy}
\email{tullio.cs@sbai.uniroma1.it}
\author{Michel Coornaert}
\address{Universit\'e de Strasbourg, CNRS, IRMA UMR 7501, F-67000 Strasbourg, France}
\email{michel.coornaert@math.unistra.fr}
\subjclass[2020]{37D20, 58F15, 54H20, 37B40, 37J45, 37B10, 37C50, 37B15, 37D20, 37C29}
\keywords{Smale space, Ruelle-Smale dynamical system, Myhill property, Moore property, Garden of Eden teorem, subshift of finite type, expansivity, pseudo-orbit tracing property, homoclinicity}
\begin{abstract}
We give  conditions that are all equivalent to surjectivity for endomorphisms of  irreducible Smale spaces.
This extends results previously obtained by Hedlund~\cite{hedlund-endomorphisms}  for full shifts, 
by Coven and Paul~\cite{coven-paul} and by Lind and Marcus~\cite{lind-marcus} for irreducible subshifts of finite type.
\end{abstract}
\date{\today}
\maketitle

\tableofcontents

\section{Introduction}

A \emph{dynamical system} is a pair $(X,f)$, where $X$ is a compact metrizable space, called the \emph{phase space}, 
and $f \colon X \to X$ is a homeomorphism.
An \emph{endomorphism} of a dynamical system $(X,f)$ is a continuous map $\tau \colon X \to X$ which commutes with $f$, i.e., such that $\tau \circ f = f \circ \tau$.
One says that a dynamical system $(X,f)$ is \emph{irreducible} (or \emph{positively topologically transitive})
if there exists a point $x \in X$ whose forward orbit $\OO^+(x) \coloneqq \{f^n(x) : n = 1,2,\dots\}$ of $x$ is dense in $X$.
Two points $x,y$ in the phase space of a dynamical system $(X,f)$ are called \emph{homoclinic} if the points $f^n(x)$ and $f^n(y)$ become arbitrarily close as $|n| \to \infty$
(i.e., $d(f^n(x),f^n(y)) \to 0$ as $|n| \to \infty$, where $d$ is any compatible metric on $X$).
Homoclinicity is an equivalence relation on the phase space of a dynamical system.
One says that an endomorphism of a dynamical system is \emph{pre-injective} if its restriction to each homoclinicity class is injective. 
\par
Roughly speaking, a \emph{Smale space} is a dynamical system $(X,f)$ such that  $X$ admits a local product structure 
for which   $f$ is uniformly contracting in one direction and uniformly expanding in the other
(see Section~\ref{sec:smale} for a precise definition).
The class of Smale spaces was introduced by Ruelle in~\cite{ruelle-thermodynamic-2nd}. 
It includes in particular 
Anosov diffeomorphisms~\cite{anosov-geodesic}, 
non-wandering sets and  basic sets of Smale's Axiom A diffeomorphisms~\cite{smale-bams}, 
subshifts of finite type~\cite{lind-marcus-second},
expansive principal algebraic dynamical systems~\cite{schmidt-book}, \cite{li-goe-spec-2019}, \cite{meyerovitch-2019},
warious types of solenoids and  attractors~\cite{williams_1974}, \cite{wieler_2014}, 
as well as dynamical systems constructed by Nekrashevych from certain self-similar groups~\cite{nekrashevych}. 
\par
The phase space of an irreducible Smale space admits a prefered invariant Borel probability measure,
called the \emph{Bowen measure} of the Smale space.
It is characterized by the fact that it is the unique invariant Borel probability measure which maximizes measure-theoretic entropy.
\par
Our main result is the following.

\begin{theorem}
\label{t:main}
Let $(X,f)$ be an irreducible Smale space 
and let $\mu$ denote the Bowen probability measure on $X$.
Suppose that $\tau \colon X \to X$ is an endomorphism of $(X,f)$.
Then the following conditions are equivalent:
\begin{enumerate}[\rm (i)]
\item
$\tau$ is surjective;
\item
$\tau$ preserves the measure $\mu$;
\item
$\mu(\tau(X)) = 1$;
\item
$\htop(\tau(X),f) = \htop(X,f)$ (here $\htop(\cdot,\cdot)$ denotes topological entropy);
\item
$\tau$ is pre-injective;
\item
$\tau$ is bounded-to-one;
\item 
$\tau$ is finite-to-one;
\item
$\tau$ is countable-to-one.
 \end{enumerate}
\end{theorem}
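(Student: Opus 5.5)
We would prove Theorem~\ref{t:main} by arranging the eight conditions into a cycle of implications. We rely on three ingredients that we take as available for irreducible Smale spaces. The first is the Garden of Eden theorem of the authors: an endomorphism is surjective if and only if it is pre-injective (Myhill and Moore properties). The second is that the Bowen measure $\mu$ is the unique measure of maximal entropy, and its support is all of $X$. The third is Bowen's coding: a Markov partition gives a finite-to-one (indeed bounded-to-one) factor map $\pi\colon \Sigma \to X$ from an irreducible subshift of finite type. With these in hand, (i)$\Leftrightarrow$(v) is exactly the Garden of Eden theorem.

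The measure and entropy conditions go as follows.

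(i)$\Rightarrow$(ii). If $\tau$ is surjective, then $\tau_*\mu$ is an $f$-invariant probability measure. Its entropy satisfies $h_{\tau_*\mu}(f) \le h_\mu(f)$, since it is a factor. To get equality, we use that a surjective endomorphism is finite-to-one (see below), and finite-to-one factor maps preserve measure-theoretic entropy (Abramov--Rokhlin, the fibres being finite). Uniqueness of the maximal measure then gives $\tau_*\mu=\mu$.

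(ii)$\Rightarrow$(iii). We have $\mu(\tau(X)) = \mu(\tau^{-1}(\tau(X))) = \mu(X) = 1$.

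(iii)$\Rightarrow$(i). $\tau(X)$ is closed and has full measure, so it is the whole of $X$, because $\operatorname{supp}\mu = X$.

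(i)$\Rightarrow$(iv). This is trivial.

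(iv)$\Rightarrow$(i). Suppose $\tau(X)$ is a proper closed invariant subset. Then its topological entropy is strictly less than $\htop(X,f)$. Otherwise, by the variational principle and upper semicontinuity of entropy (Smale spaces are expansive), $\tau(X)$ would carry a measure of maximal entropy. That measure would have to be $\mu$, which has full support, a contradiction.

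The cardinality conditions: the implications (vi)$\Rightarrow$(vii)$\Rightarrow$(viii) are trivial, so two implications remain.

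(viii)$\Rightarrow$(v). Suppose $\tau$ is not pre-injective. Then there are distinct homoclinic points $x\ne y$ with $\tau(x)=\tau(y)$. We will use the local product structure and shadowing, in the spirit of the Coven--Paul/Hedlund argument. Since $x,y$ are homoclinic, they agree up to a small bracket-neighbourhood outside a finite time window. Using irreducibility and specification (which holds for irreducible Smale spaces, or at least on each mixing component), we glue arbitrary sequences of copies of the ``$x$-block'' or ``$y$-block'' along a fixed orbit. This produces, for each sequence $\omega\in\{0,1\}^{\mathbb Z}$, a point $z_\omega$. The points $z_\omega$ are pairwise distinct, yet they all have the same image, because $\tau$ is uniformly continuous and expansivity forces equality of images. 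This gives uncountably many preimages, contradicting (viii). Making this gluing rigorous is the main obstacle. It is easiest to transfer to $\Sigma$: lift $x$ and $y$ via $\pi$ to homoclinic points of $\Sigma$, perform the gluing in the symbolic setting, and project down. Finite-to-one-ness of $\pi$ keeps the projected points distinct up to a bounded multiplicity, which is still uncountable.

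(i)$\Rightarrow$(vi). Here we use pre-injectivity, available via (i)$\Rightarrow$(v). We argue as in Lind--Marcus: a pre-injective map between spaces of equal entropy has a uniform bound $N$ on the number of mutually separated preimages of any point. Suppose some point had more than $N$ preimages. Expansivity gives $\varepsilon$-separation of preimages at some time. Pairing preimages in the past and future, and using shadowing, then produces two distinct homoclinic preimages of a single point, contradicting pre-injectivity. The bound $N$ comes from counting: combine the local product structure with a compactness argument on $\varepsilon$-balls in the stable and unstable directions. This closes the cycle (i)$\Rightarrow$(vi)$\Rightarrow$(vii)$\Rightarrow$(viii)$\Rightarrow$(v)$\Rightarrow$(i). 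The finite-to-one claim used in (i)$\Rightarrow$(ii) follows from this cycle, so it introduces no circularity.
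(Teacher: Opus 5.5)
Apart from one step, your cycle holds up. (i)$\iff$(v), (ii)$\Rightarrow$(iii)$\Rightarrow$(i) and (i)$\iff$(iv) are the paper's arguments. (i)$\Rightarrow$(vi) is in substance Proposition~\ref{p:pre-injective-b-to-one}; note that the bound $m^2$ comes from a finite cover of $X$ by $\delta$-small sets, not from entropy or the product structure. Your (i)$\Rightarrow$(ii), via bounded-to-one plus vanishing fibre entropy, is a valid non-circular alternative to the Coven--Paul argument the paper cites.

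\textbf{The gap is (viii)$\Rightarrow$(v).} You admit the gluing is not made rigorous, and the route you propose for it rests on an unproved claim: that $x,y$ lift to \emph{homoclinic} points of $\Sigma$. This does not follow from $\pi$ being finite-to-one. Take the $2$-to-$1$ factor map $\tilde x\mapsto(\tilde x_n+\tilde x_{n+1} \bmod 2)_{n\in\mathbb{Z}}$ of the full $2$-shift onto itself. Then $0^{\mathbb{Z}}$ and the point with a single $1$ are homoclinic. The lifts of the former are $0^{\mathbb{Z}},1^{\mathbb{Z}}$, and those of the latter have the form $\cdots000111\cdots$ and $\cdots111000\cdots$, so no lift of one is homoclinic to a lift of the other. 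You give no argument that Bowen's coding behaves better. Moreover, $\tau$ does not act on $\Sigma$, so even with homoclinic lifts the equality of the images $\tau(\pi(\tilde z_\omega))$ is not a sliding-block computation. It must still come from uniform continuity of $\tau\circ\pi$ and expansivity of $X$, so the detour gains nothing.

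\textbf{How to repair it.} The idea is right and can be carried out directly in $X$ using only shadowing and irreducibility (no specification, no Markov partition), much as in the proof of Proposition~\ref{p:pre-injective-b-to-one}.
Let $c$ be an expansivity constant. Choose $\varepsilon<c/2$ such that $d(u,v)\le\varepsilon$ implies $d(\tau(u),\tau(v))<c/2$, and let $\delta$ be a shadowing constant for $\varepsilon$. Take $k$ with $d(f^k(x),f^k(y))\ge c$, and $N>|k|$ with $d(f^n(x),f^n(y))\le\delta/2$ for $|n|\ge N$. By irreducibility, choose $w\in X$ and $L\ge1$ with $d(w,f^N(x))<\delta/2$ and $d(f^L(w),f^{-N}(x))<\delta/2$.
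For $\omega\in\{0,1\}^{\mathbb{Z}}$, concatenate over $i\in\mathbb{Z}$ the blocks $f^{-N}(u_i),\dots,f^{N-1}(u_i),w,\dots,f^{L-1}(w)$, where $u_i=x$ if $\omega_i=0$ and $u_i=y$ if $\omega_i=1$. This is a $\delta$-pseudo-orbit $p^\omega$, $\varepsilon$-traced by some $z_\omega$.
Since $\tau(f^n(x))=\tau(f^n(y))$, the sequence $(\tau(p^\omega_n))_{n}$ is independent of $\omega$. Hence $d(f^n(\tau(z_\omega)),f^n(\tau(z_{\omega'})))<c$ for all $n$, and all the $\tau(z_\omega)$ coincide. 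On the other hand, $z_\omega\ne z_{\omega'}$ for $\omega\ne\omega'$: look at the time corresponding to $k$ in a block where $\omega$ and $\omega'$ differ. So some fibre has cardinality $2^{\aleph_0}$.

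Repaired this way, your step is more elementary than the paper's route. The paper proves (viii)$\Rightarrow$(iv) by invoking Yan's theorem that countable-to-one factor maps preserve topological entropy. Your version also yields a sharper dichotomy: a non-pre-injective endomorphism has a fibre with the cardinality of the continuum.
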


All known examples of compact manifolds admitting Anosov diffeomorphisms are infra-nilmanifolds.
It is an open question  whether every Anosov diffeomorphism on a compact manifold is irreducible.
However, Manning~\cite{manning-1974} proved that any Anosov diffeomorphism on a infra-nilmanifold is irreducible.
It follows that all conditions in Theorem~\ref{t:main} are equivalent for endomorphisms of Anosov diffeomorphisms of infra-nilmanifolds.
\par
Theorem~\ref{t:main} extends results obtained by 
Hedlund~\cite{hedlund-endomorphisms} in the case of endomorphisms of full shifts, 
by Coven and Paul~\cite{coven-paul} and by Lind and Marcus~\cite[Theorem~8.1.16]{lind-marcus}  in the case of endomorphisms of irreducible subshifts of finite type.
\par
The paper is organized as follows.
In Section~\ref{preliminaries}, we fix terminology and collect some auxiliary results in the theory of dynamical systems. 
The proof of Theorem~\ref{t:main} is given in Section~\ref{proof}. 
In Section~\ref{counter-examples}, we present some counter-examples showing that Theorem~\ref{t:main} may become false if the hypotheses are weakened. 

\section{Preliminaries}
\label{preliminaries}

In this section, we introduce  some terminology 
and collect  auxiliary results that will be used in our proof of Theorem~\ref{t:main}.

\subsection{Sets and maps}
We write $\Z$ for the group of integers.
\par
We write $|X|$ for the cardinality of a set $X$.
\par
One says that a set $X$ is countable if $|X| \leq |\Z|$, i.e., $X$ is either finite or countably infinite.
\par
Let $f \colon X \to Y$ be a map between sets.
The \emph{fiber} of $f$ at $y \in Y$ is the pre-image set $f^{-1}(y) \coloneqq \{x \in X : f(x) = y\}$.
One says that  $f$ is \emph{finite-to-one} (resp.~\emph{countable-to-one}) if 
$f^{-1}(y)$ is finite (resp.~countable) for every $y \in Y$.
One says that  $f$ is \emph{bounded-to-one} if there exists an integer $n \geq 0$ such that
$|f^{-1}(y)| \leq n$ for all $y \in Y$.

\subsection{Entropy of dynamical systems}
Recall that a  \emph{dynamical system} is a pair $(X,f)$, where $X$ is a compact metrizable space and $f \colon X \to X$ is a homeomorphism.
We shall often write $X$ instead of $(X,f)$ if the homeomorphism $f$ is clear from the context.
\par
One says that a dynamical system $(Y,g)$ is a \emph{subsystem} of the dynamical system $(X,g)$ if $Y$ is a closed $f$-invariant subset of $X$ and $g = f|_Y \colon Y \to Y$, where $f|_Y$  is the restriction of $f$ to $Y$.
To simplify, we shall then write $f$ instead of $f|_Y$. 
\par
Let $(X,f)$ be a dynamical system.
We  write $\htop(X,f)$ for the \emph{topological entropy} of $(X,f)$ (see \cite[Section~2.5]{brin-stuck}). 
Let $M(X,f)$ denote the set of all $f$-invariant Borel probability measures on $X$.
Given $\mu \in M(X,f)$, we write $h_\mu(X,f)$ for the \emph{measure-theoretic entropy} of $(X,f)$ with respect to $\mu$
(see \cite[Section~9.3]{brin-stuck}).
By the \emph{variational principle} \cite[Theorem~9.5.4]{brin-stuck}, one has
\[
\htop(X,f) = \sup_{\mu \in M(X,f)} h_\mu(X,f).
\]
One says that $\mu \in M(X,f)$ has \emph{maximal entropy} if $\htop(X,f) = h_\mu(X,f)$.  
\par
Let $(X,f)$ and $(Y,g)$ be dynamical systems.		
One says that a map $\pi \colon X \to Y$ is a \emph{factor map} if $\pi$ is continuous, surjective, and $\pi \circ f = g \circ \pi$.
If there exists a factor map $\pi \colon X \to Y$, then one has $\htop(X,f) \geq \htop(Y,g)$ \cite[Proposition~2.5.6]{brin-stuck}.
By the following result, this inequality becomes an equality under the additional hypothesis that all fibers of $\pi$ are countable.

\begin{proposition}
\label{p:ent-countable-to-one-factor}
Let $(X,f)$ and $(Y,g)$ be dynamical systems.
Suppose that there exists a countable-to-one factor map $\pi \colon X \to Y$.
Then one has $\htop(X,f) = \htop(Y,g)$.
\end{proposition}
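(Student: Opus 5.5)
The inequality $\htop(X,f) \geq \htop(Y,g)$ is already available from \cite[Proposition~2.5.6]{brin-stuck}, so only the reverse inequality $\htop(X,f) \leq \htop(Y,g)$ needs proof. I would obtain it from Bowen's fiber entropy inequality:
\[
\htop(X,f) \leq \htop(Y,g) + \sup_{y \in Y} \htop(f, \pi^{-1}(y)).
\]
Here $\htop(f,K)$ denotes Bowen's topological entropy of the (not necessarily invariant) compact set $K \subset X$, defined via $(n,\varepsilon)$-separated or spanning sets. This inequality is classical (Bowen, \emph{Entropy for group endomorphisms and homogeneous spaces}, Theorem~17), and I would cite it rather than reprove it. With it, it suffices to show that $\htop(f,K) = 0$ for every countable compact set $K \subset X$, since each fiber $\pi^{-1}(y)$ is closed, hence compact, and countable by hypothesis.

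The main step is therefore the vanishing of Bowen entropy on countable compact sets. Let $K$ be countable and compact, and fix $\varepsilon > 0$. I would prove by transfinite induction on the Cantor--Bendixson rank that $\htop(f,K,\varepsilon) = 0$. A countable compact metrizable space is scattered, so its Cantor--Bendixson derivatives $K^{(\alpha)}$ reach $\emptyset$ at some countable successor ordinal, and the last nonempty derivative is finite.

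For the key estimate, take a finite set $F \subset K$. The Bowen balls around $F$ are open in $X$, so their union is an open set containing $F$. Its complement in $K$ is compact and has strictly smaller Cantor--Bendixson complexity. The difficulty is that Bowen balls $B_n(x,\varepsilon)$ shrink as $n$ grows, so a fixed neighbourhood does not work uniformly in $n$.

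To get around this, I would use a cleaner route: Bowen entropy is countably stable for closed sets in the form
\[
\htop\Bigl(f,\bigcup_i K_i\Bigr) = \sup_i \htop(f,K_i),
\]
where the union is taken over a countable family and the result is valid for Bowen's dimension-type definition of entropy (Bowen 1973, \emph{Topological entropy for noncompact sets}). That definition agrees with the separated-set definition on compact sets. The entropy of a single point is clearly $0$, because one point is $(n,\varepsilon)$-spanning for itself for every $n$. Writing $K$ as the countable union of its singletons then gives $\htop(f,K) = 0$.

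Combining the two steps yields $\htop(X,f) \leq \htop(Y,g) + 0$, which together with \cite[Proposition~2.5.6]{brin-stuck} gives the claimed equality.

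The main obstacle is justifying that the entropy used in Bowen's fiber inequality can be replaced by the dimension-type entropy, which is countably stable. I would handle this by noting that the two definitions coincide on compact sets and that each fiber is compact. Alternatively, one could cite the variational formulation of the fiber inequality (Ledrappier--Walters):
\[
h_\mu(X,f) \leq h_{\pi_*\mu}(Y,g) + \int \htop\bigl(f,\pi^{-1}(y)\bigr)\, d\pi_*\mu(y).
\]
Taking suprema over $\mu \in M(X,f)$ and applying the variational principle then gives the same conclusion.
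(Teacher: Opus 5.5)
\textbf{There is a genuine gap.} The step that fails is the identification of the two entropies. The quantity $\htop(f,\pi^{-1}(y))$ in Bowen's fibre inequality, and equally in the Ledrappier--Walters formula, is the separated-set (capacity-type) entropy of a compact set. Countable stability, however, holds for Bowen's dimension-type entropy. Bowen shows these two agree on compact $f$-\emph{invariant} sets, not on arbitrary compact sets. The fibres are not invariant, since $f(\pi^{-1}(y))=\pi^{-1}(g(y))$.

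In fact the lemma you need, ``every countable compact set has zero separated-set entropy'', is false, just as countable sets can have positive box dimension. Take the full shift $\{0,1\}^{\Z}$ with $d(x,y)=2^{-\min\{|i| : x_i\neq y_i\}}$. For a word $w$ of length $n$, let $x_w$ agree with $w$ on the coordinates $n,\dots,2n-1$ and be $0$ elsewhere. Then $K=\{0^{\Z}\}\cup\{x_w\}$ is countable and compact, because $d(x_w,0^{\Z})\le 2^{-|w|}$. Yet the $2^n$ points $x_w$ with $|w|=n$ are $(2n,1/2)$-separated, so the separated-set entropy of $K$ is at least $\frac12\log 2$. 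Hence neither your transfinite induction nor the countable-stability shortcut controls the quantity that actually appears in Bowen's inequality. No argument that uses only the countability of a single fibre can work.

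What is missing is an argument that uses the countability of all fibres simultaneously, and the natural one is measure-theoretic. Let $\mu\in M(X,f)$ be ergodic and $\nu=\pi_*\mu$.
\begin{enumerate}[(a)]
\item The conditional measures $\mu_y$ are carried by the countable sets $\pi^{-1}(y)$, so they are purely atomic.
\item The function $x\mapsto\mu_{\pi(x)}(\{x\})$ is $f$-invariant and a.e.\ positive, hence a.e.\ equal to some constant $1/k$. So a.e.\ $\mu_y$ is uniform on $k$ points.
\item Consequently $H_\mu\bigl(\bigvee_{j<n}f^{-j}\xi \,\big|\, \pi^{-1}\mathcal{B}_Y\bigr)\le\log k$ for every finite partition $\xi$, and the relative entropy $h_\mu(f\,|\,\pi)$ vanishes.
\item Abramov--Rokhlin then gives $h_\mu(f)=h_\nu(g)\le\htop(Y,g)$, and the variational principle gives $\htop(X,f)\le\htop(Y,g)$.
\end{enumerate}
With this argument, Bowen's inequality becomes unnecessary. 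The zero-entropy property of the fibres that your reduction needs is in fact true for countable-to-one maps, but it follows from this measure-theoretic fact via the conditional variational principle, not from countability of one fibre. The paper gives no argument of its own here: it simply quotes the second assertion of Theorem~5.7 in \cite{yan-conditional-entropy}.
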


\begin{proof}
This is a particular case of the second assertion in Theorem~5.7 of~\cite{yan-conditional-entropy} since $\Z$ is an infinite amenable group.
\end{proof}

\subsection{Intrinsic ergodicity}
Following Weiss~\cite{weiss-intrinsically-ergodic}, one says that a dynamical system $(X,f)$ is \emph{intrinsically ergodic} if $\htop(X,f) < \infty$ and there exists a unique measure $\mu \in M(X,f)$ with maximal entropy. 
One says that an intrinsically ergodic dynamical system $(X,f)$ is \emph{with full support} if the support
of the measure  with maximal entropy $\mu$ of $(X,f)$ is the whole space $X$, i.e.,
$\mu(U) > 0$ for every non-empty open subset $U$ of $X$.
\par
From the results in Section~2 of~\cite{coven-paul}, we get the following.
  
\begin{proposition}
\label{p:main_1}
Let $(X,f)$ be an intrinsically ergodic dynamical system with full support and let  
$\mu$ denote the measure with maximal entropy of $(X,f)$.
Suppose that $\tau \colon X \to X$ is an endomorphism of $(X,f)$.
Then the following conditions are equivalent:
\begin{enumerate}[\rm (i)]
\item
$\tau$ is surjective;
\item
$\tau$ preserves $\mu$ (that is, $\mu(\tau^{-1}(B)) = \mu(B)$ for every Borel subset $B \subset X$);\item
$\mu(\tau(X)) = 1$.
\end{enumerate}
\end{proposition}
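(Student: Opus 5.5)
I will prove the cycle of implications (i) $\Rightarrow$ (ii) $\Rightarrow$ (iii) $\Rightarrow$ (i), using throughout the uniqueness of the measure of maximal entropy $\mu$ and its full support.

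\emph{(i) $\Rightarrow$ (ii).} Assume $\tau$ is surjective and consider the push-forward $\nu \coloneqq \tau_*\mu$, defined by $\nu(B) = \mu(\tau^{-1}(B))$. Since $\tau$ commutes with $f$, the measure $\nu$ is $f$-invariant, so $\nu \in M(X,f)$. Because $\tau$ is surjective, it is a factor map from $(X,f,\mu)$ onto $(X,f,\nu)$ in the measure-theoretic sense. Measure-theoretic entropy does not increase under factors, so $h_\nu(X,f) \le h_\mu(X,f) = \htop(X,f)$. This inequality goes the wrong way, so I need a reverse estimate. The idea is to use surjectivity to lift measures. Every $\nu' \in M(X,f)$ has a preimage measure $\lambda \in M(X,f)$ with $\tau_*\lambda = \nu'$: take any lift and average it along $f$-orbits in the style of Krylov--Bogolyubov, using that $\tau_*$ is continuous and affine on the compact convex set $M(X,f)$, so its image is closed and contains every ergodic measure.

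This handles the measures, but it does not directly give $h_{\tau_*\mu} = \htop$. The cleaner route is the one in Coven--Paul: show that $\tau_*\mu$ has maximal entropy. To do this, apply the lifting to $\mu$ itself. Surjectivity yields $\lambda \in M(X,f)$ with $\tau_*\lambda = \mu$. Then $h_\lambda \ge h_\mu = \htop(X,f)$, since a factor has entropy at most that of its extension. Hence $h_\lambda = \htop(X,f)$, and intrinsic ergodicity gives $\lambda = \mu$. Therefore $\tau_*\mu = \mu$, which is (ii). The main obstacle is producing the lift $\lambda$. I would argue that the map $\tau_* \colon M(X,f) \to M(X,f)$ is continuous and affine, and that its image is a closed convex set. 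This image contains every ergodic measure $\nu'$: using surjectivity, pick an ergodic-generic point $y$ for $\nu'$ and a preimage $x$, then take a weak-$*$ limit point of the empirical measures along the orbit of $x$; its push-forward is $\nu'$. By ergodic decomposition and Krein--Milman, the image is then all of $M(X,f)$.

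\emph{(ii) $\Rightarrow$ (iii).} Since $\tau(X) \supset \tau(X)$, the set $\tau^{-1}(\tau(X)) = X$. Hence $\mu(\tau(X)) = \mu(\tau^{-1}(\tau(X))) = \mu(X) = 1$. Note that $\tau(X)$ is compact, hence Borel, so this is legitimate.

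\emph{(iii) $\Rightarrow$ (i).} The image $\tau(X)$ is compact and therefore closed. Its complement $U$ is open and satisfies $\mu(U) = 0$. Since $\mu$ has full support, $U$ must be empty, so $\tau(X) = X$.
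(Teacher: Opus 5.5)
Your proof is correct and follows essentially the same route as the paper. The paper cites Corollary~2.3 of \cite{coven-paul} for (i)$\iff$(ii), and your argument (lift $\mu$ to some $\lambda$ with $\tau_*\lambda=\mu$, get $h_\lambda\ge h_\mu$, then apply uniqueness of the maximal measure) is the standard proof of that result, so you have made the citation self-contained; your (iii)$\implies$(i) uses the same closed-image/full-support argument as the paper, and the only difference is that the paper gets (i)$\implies$(iii) directly from $\tau(X)=X$ rather than closing a cycle through (ii).
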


\begin{proof}
The equivalence between (i) and (ii) is given by Corollary~2.3 in \cite{coven-paul}.
\par
If $\tau$ is surjective then  $\mu(\tau(X)) = \mu(X) = 1$.
Conversely, if $\mu(\tau(X)) = 1$ then $\tau(X) =X$ since $\tau(X)$ is closed in $X$ and the support of $\mu$ is $X$.
This shows the equivalence between (i) and (iii).
\end{proof}

\subsection{Expansivity}
Let $(X,f)$ be a dynamical system and let $d$ be a compatible metric on $X$.
One says that the dynamical system $(X,f)$ is \emph{expansive}
if there exists $c > 0$ such that, for all distinct $x,y \in X$, there exists $n \in \Z$ satisfying $d(f^n(x),f^n(y)) \geq c$.
Such a constant $c$ is then called an \emph{expansivity constant} for $(X,d,f)$.
The fact that $(X,f)$ is expansive or not  does not depend on the choice of the compatible metric $d$ by compactness of $X$.
Clearly, every subsystem of an expansive dynamical system is itself expansive.
\par
It is known that if $(X,f)$ is an expansive dynamical system then there exists $\mu \in M(X,f)$ with maximal entropy
(see e.g.~\cite[Exercise~9.5.2]{brin-stuck}). Note that such a $\mu$ is not unique in general. 

\begin{proposition}
\label{p:main-2}
Let $(X,f)$ be an expansive intrinsically ergodic dynamical system with full support. 
Suppose that $\tau \colon X \to X$ is an endomorphism of $(X,f)$.
Then the following conditions are equivalent:
\begin{enumerate}[\rm (i)]
\item
$\tau$ is surjective;
\item
$\htop(\tau(X),f) = \htop(X,f)$.
\end{enumerate}
\end{proposition}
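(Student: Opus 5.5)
The implication (i) $\Rightarrow$ (ii) is immediate: if $\tau$ is surjective then $\tau(X) = X$, so $\htop(\tau(X),f) = \htop(X,f)$. The real content is (ii) $\Rightarrow$ (i). I plan to reduce it to Proposition~\ref{p:main_1}, by showing that (ii) forces $\mu(\tau(X)) = 1$, where $\mu$ is the measure of maximal entropy of $(X,f)$.

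First, observe that $Y \coloneqq \tau(X)$ is a closed subset of $X$, being the continuous image of a compact space. It is also $f$-invariant, since $f(\tau(X)) = \tau(f(X)) = \tau(X)$ because $f$ is a homeomorphism. So $(Y,f)$ is a subsystem of $(X,f)$, and it is expansive because expansivity passes to subsystems. By the remark preceding the statement, an expansive system admits a measure of maximal entropy. Choose $\nu \in M(Y,f)$ with $h_\nu(Y,f) = \htop(Y,f)$.

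Next, push $\nu$ forward by the inclusion $Y \hookrightarrow X$ to get $\tilde\nu \in M(X,f)$, with $\tilde\nu(B) = \nu(B \cap Y)$. Measure-theoretic entropy is unchanged under this embedding, since $(X,\tilde\nu,f)$ and $(Y,\nu,f)$ are measure-theoretically isomorphic ($Y$ has full $\tilde\nu$-measure). Hence, using hypothesis (ii),
\[
h_{\tilde\nu}(X,f) = h_\nu(Y,f) = \htop(Y,f) = \htop(X,f).
\]
Thus $\tilde\nu$ is a measure of maximal entropy for $(X,f)$. By intrinsic ergodicity, $\tilde\nu = \mu$, and therefore $\mu(\tau(X)) = \tilde\nu(Y) = 1$. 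Proposition~\ref{p:main_1}, specifically the implication (iii) $\Rightarrow$ (i) coming from full support and closedness of $\tau(X)$, now gives that $\tau$ is surjective.

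The only delicate point is the existence of a maximal entropy measure on the subsystem $\tau(X)$. This is exactly where expansivity is needed: it gives upper semicontinuity of $\nu \mapsto h_\nu$, so the supremum in the variational principle is attained. The finiteness $\htop(X,f) < \infty$, which is part of the definition of intrinsic ergodicity, ensures that the entropy equalities above are meaningful. Everything else is bookkeeping.
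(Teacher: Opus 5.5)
Your proof is correct and follows essentially the same route as the paper: take a maximal-entropy measure on the expansive subsystem $(\tau(X),f)$, extend it to $X$, use intrinsic ergodicity to identify it with $\mu$, and conclude from full support that $\tau(X)=X$. The only cosmetic difference is that you finish via the implication (iii)$\implies$(i) of Proposition~\ref{p:main_1}, while the paper argues directly that the support of $\mu$ lies in $\tau(X)$; this is the same closedness-plus-full-support argument.
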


\begin{proof}
The implication (i) $\implies$ (ii) is trivial.
\par
Conversely, suppose (ii).
As the dynamical system $(\tau(X),f)$ is expansive, there exists $\nu \in M(\tau(X),f)$ with maximal entropy.
Consider the measure $\mu \in M(X,f)$ defined by $\mu(B) = \nu(B \cap \tau(X))$ for every Borel subset $B \subset X$.
Then $h_{\mu}(X,f) = h_{\nu}(\tau(X),f) = \htop(\tau(X),f) = \htop(X,f)$.
It follows that $\mu$ is the measure with maximal entropy of $(X,f)$.
Consequently, the support of $\mu$ is $X$.
Since the support of $\mu$ is contained in $\tau(X)$ by definition, we conclude that $\tau(X) = X$.
This shows that (ii) implies (i).
\end{proof}

\subsection{The pseudo-orbit tracing property}
Let $(X,f)$ be a dynamical system and let $d$ be a compatible metric on $X$.
\par
Let $\varepsilon > 0$. 
One says that a sequence $(p_n)_{n \in \Z}$ of points of $X$ is $\varepsilon$-\emph{traced} by a point $z \in X$ if one has $d(f^n(z), p_n) \leq \varepsilon$ for all $n \in \Z$.
\par
Let $\delta > 0$.
One says that a sequence $(p_n)_{n \in \Z}$ of points of $X$ is a $\delta$-\emph{pseudo-orbit} if one has
$d(f(p_n),p_{n+1}) \leq \delta$ for all $n \in \Z$.
\par
One says that the dynamical system $(X,f)$ has the \emph{pseudo-orbit tracing property} (or \emph{shadowing property}) if for every $\varepsilon > 0$, there exists $\delta > 0$ such that every $\delta$-pseudo-orbit in $X$ is $\varepsilon$-traced by some point in $X$.
By compactness of $X$,  the fact that $f$ has the pseudo-orbit tracing property or not does not depend on the choice of the compatible metric $d$.

\begin{proposition}
\label{p:pre-injective-b-to-one}
Let $(X,f)$ be an expansive dynamical system with the pseudo-orbit tracing property.
Let $\tau \colon X \to X$ be a pre-injective endomorphism of $(X,f)$.
Then $\tau$ is bounded-to-one.
\end{proposition}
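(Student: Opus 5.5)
We argue by contraposition: assuming $\tau$ is not bounded-to-one, we construct two distinct homoclinic points with the same image under $\tau$. Fix a compatible metric $d$ on $X$ and an expansivity constant $c > 0$ for $(X,d,f)$.

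Choose $\varepsilon > 0$ small with respect to $c$ (say $2\varepsilon < c$). By uniform continuity of $\tau$, there is also an $\eta \in (0,\varepsilon]$ such that $d(x,y) \le 2\eta$ implies $d(\tau(x),\tau(y)) < c/2$. Let $\delta > 0$ be given by the pseudo-orbit tracing property for $\eta$, and take $\delta \le \eta$. By compactness, cover $X$ by finitely many open sets of diameter less than $\delta$; let $N$ be their number. Since $\tau$ is not bounded-to-one, there is a point $y \in X$ whose fiber contains more than $N^2$ distinct points.

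Take $N^2+1$ distinct points $x_0,\dots,x_{N^2}$ in $\tau^{-1}(y)$. For $i \neq j$, the points $x_i$ and $x_j$ are distinct, so by expansivity they separate by at least $c$ at some time. Here the main obstacle appears: a single time $n$ will not work for all pairs. We need a window $[-M,M]$, depending on the chosen finite family, in which every pair separates. Since the family is finite, such an $M$ exists; but then we also need control at the two ends $\pm M$ that is independent of $M$.

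We handle this with pigeonhole at the two ends. Each $x_i$ determines a pair (cell containing $f^{-M}(x_i)$, cell containing $f^{M}(x_i)$), and there are at most $N^2$ such pairs. Hence there are $i \neq j$ with $f^{-M}(x_i)$ and $f^{-M}(x_j)$ in the same cell, and likewise $f^{M}(x_i)$ and $f^{M}(x_j)$.

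Now build the $\delta$-pseudo-orbit
\[
p_n =
\begin{cases}
f^n(x_j) & \text{if } n < -M \text{ or } n > M,\\
f^n(x_i) & \text{if } -M \le n \le M.
\end{cases}
\]
Its only jumps occur at the two switching times, where they are less than $\delta$, possibly after shifting the index by one and using continuity of $f$ to adjust $\delta$. Let $z$ be a point that $\eta$-traces this pseudo-orbit. We then compare it with the pseudo-orbit $q_n = f^n(x_j)$ for all $n$, which is traced by $x_j$ itself.

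The goal is to show that $z$ is homoclinic to $x_j$, that $z \neq x_j$, and that $\tau(z) = \tau(x_j)$.

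First, $z \neq x_j$. The point $z$ stays $\eta$-close to $f^n(x_i)$ on $[-M,M]$, while $f^n(x_i)$ and $f^n(x_j)$ are $c$-apart at some time in that window.

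Second, $z$ is homoclinic to $x_j$. For $|n| > M$, $f^n(z)$ stays within $\eta \le \varepsilon < c/2$ of $f^n(x_j)$. Moreover, $f^{n}(z)$ and $f^{n}(x_j)$ stay within $c$ of each other for all $n \ge M+1$. A standard consequence of expansivity is then that their distance tends to $0$: for every $\gamma > 0$ there is $K$ such that two orbits staying $c$-close on $[-K,K]$ are $\gamma$-close at time $0$. Applying this to $f^n(z)$ and $f^n(x_j)$ as $n \to \pm\infty$ gives homoclinicity.

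Third, $\tau(z) = \tau(x_j)$. We have $\tau(f^n(x_i)) = f^n(y) = \tau(f^n(x_j))$ for all $n$. Since $f^n(z)$ is $\eta$-close to $p_n$, we get $d(f^n\tau(z), f^n(y)) < c/2$ for every $n$. By expansivity, $\tau(z) = y = \tau(x_j)$.

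Thus $z$ and $x_j$ are distinct homoclinic points with the same image, contradicting pre-injectivity. Hence $\tau$ is bounded-to-one, with the bound $|\tau^{-1}(y)| \le N^2$ for all $y \in X$.
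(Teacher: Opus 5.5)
Your proof is correct and is essentially the paper's argument: cover $X$ by $N$ small cells, pigeonhole the pair of cells visited at times $\pm M$ by $N^2+1$ points of one fiber, splice two of these orbits into a $\delta$-pseudo-orbit, and show that the tracing point has the same image as, is homoclinic to, and differs from the orbit used outside the window. The paper phrases the last step as ``$z = x_i$ by pre-injectivity, contradicting separation inside the window,'' and it proves homoclinicity by the same compactness argument you cite as a standard lemma. The index shift you flag is genuinely needed: as written your right-hand jump is at $M \to M+1$, so either pigeonhole at time $M+1$ or use the half-open window $-M \le n < M$ with $M$ exceeding all separation times, as the paper does.
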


\begin{proof}
Fix a compatible metric $d$ on $X$  and let $c > 0$ be an expansivity constant for $(X,d,f)$.
As $\tau$ is uniformly continuous by compactness of $X$, there exists $0 <\varepsilon < c$ such that
$d(\tau(x),\tau(y)) < c$ whenever $x,y \in X$ satisfy $d(x,y) < \varepsilon$.
\par
By the pseudo-orbit tracing property, there exists $\delta>0$ such that every $\delta$-pseudo-orbit in $X$  is $\varepsilon$-traced by some point of $X$.
\par 
We claim that there exists  a finite cover $U_1,\ldots,U_m$ of $X$ such that $\diam(U_i) \coloneqq \sup\{d(x,y) : x,y \in U_i\} \leq \delta$ for each $i=1,\ldots,m$. 
Indeed, the open balls   $B_{\delta/2}(x)$ of radius $\delta/2$, $x \in X$, cover $X$. 
By compactness of $X$, there exist finitely many  points  $x_1, \ldots, x_m \in X$ such that
 the open balls $B_{\delta/2}(x_1), \dots,   B_{\delta/2}(x_m)$ cover $X$. 
We can then take  $U_i \coloneqq B_{\delta/2}(x_i)$, $i=1,\ldots, m$.
\par
Let us show now that every fiber of $\tau$ has cardinality at most $m^2$.
Suppose by contradiction that there exists $y \in X$ such that $| \tau^{-1}(y) | \geq m^2+1$.
Pick distinct points $x_1,\ldots,x_{m^2+1}$  in $\tau^{-1}(y)$.
\par
By expansivity, for all distinct $ i, j \in \{1,\dots,m^2+1\}$, there exists $k_{ij} \in \Z$ such that
\begin{equation}
\label{e:separation}
d(f^{k_{ij}}(x_i),f^{k_{ij}}(x_j)) \geq c.
\end{equation}
Set
\[ 
N \coloneqq 1+ \max\{|k_{ij}|: i,j \in \{1,\dots,m^2+1\} \text{ and } i \not= j\}.
\] 
\par
For each $i \in \{1,\ldots, m^2+1\}$, choose
$a_i,b_i \in \{1, \ldots, m\}$ such that  $f^{-N}(x_i) \in U_{a_i}$ and  $f^N(x_i) \in U_{b_i}$.
By the pigeon-hole principle, we can find distinct $i ,j \in \{1,\dots,m^2 + 1\}$ such that $(a_i,b_i) = (a_j,b_j)$.
We then have
\begin{equation}
\label{e:inegalites-aux-bords-de-l'intervalle}
d(f^{-N}(x_i),f^{-N}(x_j)) \leq \delta \text{ and } d(f^N(x_i),f^N(x_j)) \leq  \delta.
\end{equation}
For every $n \in \Z$, define the point  $p_n \in X$ by setting
\[
p_n \coloneqq 
\begin{cases}
f^n(x_i) & \text{ if } n < -N \text{ or } n \geq N \\
f^n(x_j) & \text{ if } -N \leq n < N. 
\end{cases}
\]
We then have $f(p_n) = p_{n+1}$ and hence $d(f(p_n),p_{n + 1}) = 0$ for all $n \in \Z \setminus \{-N -1,N - 1\}$.
On the other hand, we have 
$f(p_{-N-1}) = f(f^{-N-1}(x_i)) = f^{-N}(x_i)$, $p_{-N} = f^{-N}(x_j)$, 
$f(p_{N-1}) =f(f^{N-1}(x_j)) = f^N(x_j)$, $p_N = f^N(x_i)$. 
We deduce that   $d(f(p_n),p_{n + 1}) \leq \delta$ for each 
$n \in \{-N -1,N - 1\}$
by using~\eqref{e:inegalites-aux-bords-de-l'intervalle}. 
Thus, the sequence  $(p_n)_{n\in \Z}$ is a $\delta$-pseudo-orbit. 
By our choice of $\delta$,   there exists a point $z \in X$ which $\varepsilon$-traces the sequence $(p_n)_{n \in \Z}$, i.e., such that
\begin{equation}
\label{e:tracing}
d(f^n(z),p_n) \leq \varepsilon \text{ for all } n \in \Z.
\end{equation}
This implies in particular
\begin{equation}
\label{e:tracing-x-i}
d(f^n(z),f^n(x_i)) \leq \varepsilon \text{ for all } n \in \Z \text{ such that } n < -N \text{ or } n \geq N. 
\end{equation}
On the other hand, since $\tau(x_i) = \tau(x_j)=y$ and $\tau$ commutes with $f$, we have
$\tau(p_n)=f^n(y)$ for all $n \in \Z$.
By our choice of $\varepsilon$, we deduce that
$d(f^n(\tau(z)),f^n(y)) = d(\tau(f^n(z)),\tau(p_n)) < c$ for all
$n \in \Z$. 
As $c$ is an expansivity constant, this implies that  $\tau(z) = y = \tau(x_i)$.
\par
We next show that the points $z$ and $x_i$ are homoclinic, that is, $d(f^n(z),f^n(x_i))\to 0$ as $|n|\to\infty$.
By the definition of $p_n$ and \eqref{e:tracing}, we have
\begin{equation} \label{e:tail-bound}
d(f^n(z),f^n(x_i)) = d(f^n(z),p_n) \leq \varepsilon \text{ whenever  }|n|>N.
\end{equation}
Suppose that $z$ and $x_i$ are not homoclinic.
Then there exist $\eta>0$ and a sequence $(n_\ell)_{\ell\geq 1}$ in $\Z$ such that $|n_\ell|\to\infty$ and $d(f^{n_\ell}(z),f^{n_\ell}(x_i))\geq\eta$ for every $\ell\geq 1$.
\par
By compactness of $X\times X$, after passing to a subsequence if necessary,  we may assume that $f^{n_\ell}(z)\to u$ and $f^{n_\ell}(x_i)\to v$ for some $u,v\in X$.
By continuity of the metric, $d(u,v)\geq\eta$, so $u\neq v$.
\par
 Fix $k\in\Z$.
Since $|n_\ell+k|\to\infty$, we deduce from \eqref{e:tracing-x-i} that 
$d(f^k(f^{n_\ell}(z)),f^k(f^{n_\ell}(x_i)) = d(f^{n_\ell+k}(z),f^{n_\ell+k}(x_i)) \leq \varepsilon$ for all sufficiently large $\ell$.
By continuity of $f^k$, passing to the limit as $\ell \to \infty$, gives us $d(f^k(u),f^k(v))\leq\varepsilon<c$. 
As this holds for all $k\in\Z$ and $c$ is an expansivity constant, this implies $u=v$, a contradiction. 
Thus $z$ and $x_i$ are homoclinic.
\par
Since $\tau(z) = y = \tau(x_i)$, we obtain $z=x_i$ by pre-injectivity.
Consequently, \eqref{e:tracing} gives us
$d(f^n(x_i),f^n(x_j)) = d(f^n(z),p_n)  \leq \varepsilon < c$ whenever  $-N \leq n < N$.
As $-N < k_{ij} < N$ by definition of $N$, this contradicts \eqref{e:separation}.
 Therefore every fiber of $\tau$ has cardinality at most $m^2$.
This shows that  $\tau$ is bounded-to-one.
\end{proof}

\subsection{Smale spaces}
\label{sec:smale}
Ruelle's original definition of a Smale space (cf.~\cite[Section~7.1]{ruelle-thermodynamic-2nd}, \cite[Definition~2.1.6]{putnam-homology-smale-spaces-2014},
\cite[Section~2.2]{putnam-notes-smale-spaces-2015}) goes as follows.
One says that a dynamical system $(X,f)$ is a \emph{Smale space} if there exist a compatible metric $d$ on $X$,  constants $\varepsilon > 0$ and $0 < \lambda < 1$,
and a continuous map 
\[
 [\cdot,\cdot] \colon \Delta_{\varepsilon} \coloneqq \{(x,y) \in X \times X : d(x,y) \leq \varepsilon\} \to X
\]
satisfying the following conditions for all $x,y,z \in X$:
\begin{enumerate}[\rm (Sm1)]
\item
$[x,x] = x$ for all $x \in X$;
\item
$[[x, y], z] = [x, z]$ whenever both sides are defined;
\item
$[x, [y, z]] = [x, z]$ whenever both sides are defined;
\item
$[f(x),f(y)] = f([x,y])$ whenever both sides are defined;
\item
$d(f(y),f(z)) \leq \lambda d(y,z)$ whenever $(y,x),(z,x) \in \Delta_\varepsilon$ and $[y,x] = x= [z,x]$;
\item
$d(f^{-1}(y),f^{-1}(z)) \leq \lambda d(y,z)$ whenever $(x,y),(x,z) \in \Delta_{\varepsilon}$ and $[x,y] = x = [x,z]$.
\end{enumerate}
Every Smale space is expansive \cite[Section~7.3]{ruelle-thermodynamic-2nd}, \cite[Proposition~2.1.9]{putnam-homology-smale-spaces-2014}
and has the pseudo-orbit tracing property 
\cite[Section~7.3]{ruelle-thermodynamic-2nd}, \cite[Theorem~4.3.3]{putnam-notes-smale-spaces-2015}.
Conversely, it is known that every expansive dynamical system having  the pseudo-orbit tracing property is a Smale space
\cite[Theorem]{ombach-smale-spaces}.
\par
Every irreducible Smale space is intrinsically ergodic with full support 
(cf.~\cite[Section~7]{ruelle-thermodynamic-2nd}, \cite{putnam-bowen-measures}). 
The unique measure with maximal entropy of a Smale space is called its \emph{Bowen measure}.
\par
Given a finite set $A$, a subshift $X \subset A^{\Z}$ is a Smale space if and only if $X$ is of finite type
(cf.~\cite{csc-goe-smale} and the references therein).
The Bowen measure of an irreducible subshift of finite type is also called the \emph{Parry measure} of the subshift
(see~\cite{parry-intrinsic}, \cite[Section~13.3]{lind-marcus}).

\section{Proof of the main result}
\label{proof}

The equivalences   (i) $\iff$ (ii)$\iff$ (iii)  follow from Proposition~\ref{p:main_1}.
\par
The equivalence (i) $\iff$ (iv) follows from Proposition~\ref{p:main-2}.
\par
The equivalence  (i) $\iff$ (v) is the Garden of Eden theorem for irreducible Smale spaces established in~\cite[Theorem~1.1]{csc-goe-smale}. 
\par
We clearly have  (vi) $\implies$ (vii) $\implies$ (viii).
\par
Implication (viii) $\implies$ (iv) immediately follows from Proposition~\ref{p:ent-countable-to-one-factor}.
\par
To conclude, it suffices to prove that (v) implies (vi).
This follows from Proposition~\ref{p:pre-injective-b-to-one}.

\section{Some counterexamples}
\label{counter-examples}

The following example shows that a Smale space can admit a pre-injective endomorphism which is not surjective (cf.~\cite[Example~8.3]{ccs}).

\begin{example}
Let $A \coloneqq \{0,1\}$. Let $X \subset A^{\Z}$ denote the subshift of finite type 
admitting $\{01,10\}$ as a defining set of forbidden words.
Thus, $X$ consists of the two constant configurations $0^{\Z}$ and $1^{\Z}$.
The map  $\tau \colon X \to X$ defined by $\tau(0^{\Z}) = \tau(1^{\Z}) = 0^{\Z}$ is 
an endomorphism of $X$ which is pre-injective but not surjective.
Observe that $\tau$ is bounded-to-one since $|\tau^{-1}(y)| \leq 2$ for every $y \in X$.
\end{example}

The following example shows that a Smale space can admit a surjective endomorphism which is not pre-injective (cf.~\cite[Example~8.7]{ccs}). 

\begin{example}
Let $A \coloneqq \{0,1,2\}$. Let $X \subset A^{\Z}$ denote the subshift of finite type admitting
$\{10,20,21\}$ as a defining set of forbidden words. 
Consider the map $\tau \colon X \to X$, where $\tau(x) \in X$ is obtained from $x \in X$  by replacing the word $112$ (if it appears in $x$) by the word $122$.
Clearly,  $\tau$ is a surjective endomorphism of $X$.
On the other hand,  $x_0 \coloneqq \dots00.1222\dots$ and $x_1 \coloneqq \dots00.1122\dots$ are distinct homoclinic configurations in $X$ with the same image $\tau(x_0) = \tau(x_1) = x_0$,
so that $\tau$ is not pre-injective.
Observe  that $\tau$ is bounded-to-one since $|\tau^{-1}(y)| \leq 2$ for every $y \in X$.  
\end{example}

The following example shows that an irreducible sofic subshift can admit a surjective endomorphism which is neither pre-injective nor countable-to-one.
Note that every irreducible sofic subshift is expansive and intrinsically ergodic with full support (cf.~\cite{weiss-sofic-1972}, \cite[Section~13.3]{lind-marcus}).

\begin{example}
Let $A \coloneqq \{0,1\}$. Let $X \subset A^{\Z}$ denote the \emph{even subshift}, i.e., the subshift
admitting $\{01^{2n + 1}0 : n = 0,1,\dots\}$ as a defining set of forbidden words.
It is known that $X$ is an irreducible sofic subshift (see e.g.~\cite[Example~2.1.9 and Section~1.3]{lind-marcus-second}).
Fiorenzi~\cite{fiorenzi-sofic} proved that
$X$ admits a surjective endomorphism which is not pre-injective (see also~\cite[Exercise~6.74]{csc-ecag}).
Such an endomorphism is not countable-to-one by~\cite[Theorem~8.1.19]{lind-marcus-second}.
On the other hand, Fiorenzi~\cite[Corollary~2.21]{fiorenzi-sofic} (see also \cite[Exercise~6.111]{csc-ecag})
proved that every pre-injective endomorphism of an irreducible sofic subshift is surjective.
\end{example}


\begin{thebibliography}{10}

\bibitem{anosov-geodesic}
{\sc D.~V. Anosov}, {\em Geodesic flows on closed {R}iemann manifolds with
  negative curvature}, vol.~No. 90 (1967) of Proceedings of the Steklov
  Institute of Mathematics, American Mathematical Society, Providence, RI,
  1969.
\newblock Translated from the Russian by S. Feder.

\bibitem{brin-stuck}
{\sc M.~Brin and G.~Stuck}, {\em Introduction to dynamical systems}, Cambridge
  University Press, Cambridge, 2002.

\bibitem{csc-ecag}
{\sc T.~Ceccherini-Silberstein and M.~Coornaert}, {\em Exercises in cellular
  automata and groups}, Springer Monographs in Mathematics, Springer, Cham,
  [2023] \copyright 2023.
\newblock With a foreword by Rostislav I. Grigorchuk.

\bibitem{csc-goe-smale}
\leavevmode\vrule height 2pt depth -1.6pt width 23pt, {\em A garden of {E}den
  theorem for {S}male spaces}, Mosc. Math. J., 25 (2025), pp.~479--494.

\bibitem{ccs}
{\sc T.~Ceccherini-Silberstein, M.~Coornaert, and V.~Salo}, {\em On surjunctive
  and injunctive subshifts of finite type}, arXiv:2606.17757.,  (2026).

\bibitem{coven-paul}
{\sc E.~M. Coven and M.~E. Paul}, {\em Endomorphisms of irreducible subshifts
  of finite type}, Math. Systems Theory, 8 (1974/75), pp.~167--175.

\bibitem{fiorenzi-sofic}
{\sc F.~Fiorenzi}, {\em The {G}arden of {E}den theorem for sofic shifts}, Pure
  Math. Appl., 11 (2000), pp.~471--484.

\bibitem{hedlund-endomorphisms}
{\sc G.~A. Hedlund}, {\em Endomorphisms and automorphisms of the shift
  dynamical system}, Math. Systems Theory, 3 (1969), pp.~320--375.

\bibitem{putnam-bowen-measures}
{\sc D.~B. Killough and I.~F. Putnam}, {\em Bowen measure from heteroclinic
  points}, Canad. J. Math., 64 (2012), pp.~1341--1358.

\bibitem{li-goe-spec-2019}
{\sc H.~Li}, {\em Garden of {E}den and specification}, Ergodic Theory Dynam.
  Systems, 39 (2019), pp.~3075--3088.

\bibitem{lind-marcus}
{\sc D.~Lind and B.~Marcus}, {\em An introduction to symbolic dynamics and
  coding}, Cambridge University Press, Cambridge, 1995.

\bibitem{lind-marcus-second}
\leavevmode\vrule height 2pt depth -1.6pt width 23pt, {\em An introduction to
  symbolic dynamics and coding}, Cambridge Mathematical Library, Cambridge
  University Press, Cambridge, second~ed., 2021.

\bibitem{manning-1974}
{\sc A.~Manning}, {\em There are no new {A}nosov diffeomorphisms on tori},
  Amer. J. Math., 96 (1974), pp.~422--429.

\bibitem{meyerovitch-2019}
{\sc T.~Meyerovitch}, {\em Pseudo-orbit tracing and algebraic actions of
  countable amenable groups}, Ergodic Theory Dynam. Systems, 39 (2019),
  pp.~2570--2591.

\bibitem{nekrashevych}
{\sc V.~Nekrashevych}, {\em Groups and topological dynamics}, vol.~223 of
  Graduate Studies in Mathematics, American Mathematical Society, Providence,
  RI, [2022] \copyright 2022.

\bibitem{ombach-smale-spaces}
{\sc J.~Ombach}, {\em Equivalent conditions for hyperbolic coordinates},
  Topology Appl., 23 (1986), pp.~87--90.

\bibitem{parry-intrinsic}
{\sc W.~Parry}, {\em Intrinsic {M}arkov chains}, Trans. Amer. Math. Soc., 112
  (1964), pp.~55--66.

\bibitem{putnam-homology-smale-spaces-2014}
{\sc I.~F. Putnam}, {\em A homology theory for {S}male spaces}, Mem. Amer.
  Math. Soc., 232 (2014), pp.~viii+122.

\bibitem{putnam-notes-smale-spaces-2015}
\leavevmode\vrule height 2pt depth -1.6pt width 23pt, {\em Lecture notes on
  {S}male spaces},  (2015).
\newblock http://www.math.uvic.ca/faculty/putnam.

\bibitem{ruelle-thermodynamic-2nd}
{\sc D.~Ruelle}, {\em Thermodynamic formalism}, Cambridge Mathematical Library,
  Cambridge University Press, Cambridge, second~ed., 2004.
\newblock The mathematical structures of equilibrium statistical mechanics.

\bibitem{schmidt-book}
{\sc K.~Schmidt}, {\em Dynamical systems of algebraic origin}, vol.~128 of
  Progress in Mathematics, Birkh\"auser Verlag, Basel, 1995.

\bibitem{smale-bams}
{\sc S.~Smale}, {\em Differentiable dynamical systems}, Bull. Amer. Math. Soc.,
  73 (1967), pp.~747--817.

\bibitem{weiss-intrinsically-ergodic}
{\sc B.~Weiss}, {\em Intrinsically ergodic systems}, Bull. Amer. Math. Soc., 76
  (1970), pp.~1266--1269.

\bibitem{weiss-sofic-1972}
\leavevmode\vrule height 2pt depth -1.6pt width 23pt, {\em Subshifts of finite
  type and sofic systems}, Monatsh. Math., 77 (1973), pp.~462--474.

\bibitem{wieler_2014}
{\sc S.~Wieler}, {\em Smale spaces via inverse limits}, Ergodic Theory Dynam.
  Systems, 34 (2014), pp.~2066--2092.

\bibitem{williams_1974}
{\sc R.~F. Williams}, {\em Expanding attractors}, Inst. Hautes \'Etudes Sci.
  Publ. Math.,  (1974), pp.~169--203.

\bibitem{yan-conditional-entropy}
{\sc K.~Yan}, {\em Conditional entropy and fiber entropy for amenable group
  actions}, J. Differential Equations, 259 (2015), pp.~3004--3031.

\end{thebibliography}
\end{document}